\newcommand*\Bell{\ensuremath{\boldsymbol\ell}}
\newcommand*\Bxi{\ensuremath{\boldsymbol\xi}}
\newcommand*\Btheta{\ensuremath{\boldsymbol\theta}}
\theoremstyle{definition}
\newtheorem{definition}{Definition}
\theoremstyle{plain}
\newtheorem{theorem}{Theorem}
\newtheorem{lemma}{Lemma}
\newtheorem*{theorem*}{Theorem}
\newtheorem*{definition*}{Definition}
\title{Stable polynomials and crystalline measures}
\author{Pavel Kurasov}
\author{Peter Sarnak}
 \date{\today}
\begin{document}

\maketitle

\begin{abstract}
Explicit examples of {\bf positive} crystalline measures and Fourier quasicrystals 
are constructed using pairs of stable of polynomials, answering several
open questions in the area.
\end{abstract}

\section{Introduction}

Our investigation  of 
the additive structure of the spectrum of metric graphs \cite{KuSa2} provides exotic crystalline measures, in fact ones that give
 answers to a number of open problems. In this note we explicate the simplest examples and place the construction into the natural general setting of stable polynomials in several variables. \vspace{3mm}

We recall the definitions. 

\begin{definition*}
A {\bf crystalline measure} $ \mu $ on $ \mathbb R $ is a tempered distribution of the form
\begin{equation} \label{mudef}
\mu = \sum_{\lambda \in \Lambda} a_\lambda \delta_\lambda, \quad  \mbox{for which} \quad \hat{\mu} = \sum_{s \in S} b_s \delta_s,
\end{equation}
where $ \delta_\xi $ is a delta mass at $ \xi , $ and $ \Lambda $ and $ S $ are discrete subsets of $ \mathbb R $ \cite{Me16}.

If both $ | \mu | $ and $ |\hat{\mu}| $ are tempered as well, then following 
\cite{LeOl17}*{Section 1.1}
we call $ \mu $ a  {\bf Fourier quasicrystal}.
\end{definition*}

The basic example of a crystalline measure, in fact a Fourier quasicrystal, comes from the Poisson summation formula:
\begin{equation} \label{Psf}
\mu = \sum_{m \in \mathbb Z} \delta_m  \Rightarrow  \hat{\mu} = \sum_{s \in \mathbb Z} \delta_{2 \pi s},
\end{equation}
and its extension to finite combinations of these called ``generalized Dirac combs'' \cite{Me16}.  Various examples of
crystallline measures that are not  Dirac combs were constructed by Guinand \cite{Gu59}.
Note however that his example 4 page 264 coming from the explicit formula in the theory of primes does not give a Fourier quasicrystal, even assuming the Riemann hypothesis.

Towards a classification theory of crystalline measures $ \mu $ there 
are a series of results that ensure that $ \mu $ is a generalized Dirac comb (\cite{Me70,LeOl15,LeOl17,Co88}), one
of the first being

\begin{theorem*}[Meyer \cite{Me70}]   \label{ThMe}
If $ a_\lambda $ take values in a finite set and $ |\hat{\mu}| $ is translation bounded, that is $ \displaystyle  \sup_{x \in \mathbb R} |\hat{\mu}| (x+ [0,1]) < \infty $, then $ \mu $ is a generalized Dirac comb.
\end{theorem*}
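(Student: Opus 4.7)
My plan is to translate the hypotheses on $\hat\mu$ into an almost-periodicity statement for $\mu$, and then to exploit the finite value set of $a_\lambda$ to upgrade almost periodicity to exact periodicity. A translation-bounded positive measure with countable discrete support, such as $|\hat\mu|$, is automatically strongly almost periodic. By the classical criterion of Argabright--Gil de Lamadrid (see also Baake--Moody), a tempered distribution whose Fourier transform is strongly almost periodic with pure point spectrum is itself a strongly almost periodic measure. Consequently $\mu$ is a translation-bounded measure on $\mathbb R$, and for every $\varepsilon>0$ the set of $\varepsilon$-almost periods of $\mu$, measured in the translation-bounded measure norm, is relatively dense.

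To upgrade almost periodicity to exact periodicity, write $A=\{a_1,\dots,a_N\}$ for the finite value set of $a_\lambda$ and set $\delta=\tfrac{1}{2}\min_{a\ne a'}|a-a'|$. Because $\hat\mu$ is translation-bounded with discrete support, a standard Meyer-type argument shows that $\Lambda$ is uniformly discrete, say $\inf_{\lambda\ne\lambda'}|\lambda-\lambda'|\ge \eta>0$. Testing the difference $\mu-\tau_t\mu$ against bump functions of width $\eta/4$ localized at each atom, one sees that if $t$ is an $\varepsilon$-almost period with $\varepsilon<\min(\delta,\tfrac12\min_j|a_j|)$, then on every compact window the supports of $\mu$ and $\tau_t\mu$ must match exactly and the amplitudes must agree; hence $\tau_t\mu=\mu$ and $t$ is an exact period. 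So the group $P$ of exact periods of $\mu$ is simultaneously closed and relatively dense in $\mathbb R$, whence $P=T\mathbb Z$ for some $T>0$ (the alternative $P=\mathbb R$ being ruled out by discreteness of $\Lambda$).

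Finally, $T$-periodicity forces $\Lambda$ to be a finite union of cosets $\lambda_j+T\mathbb Z$ with constant weight $c_j\in A$ on each, so
\[
\mu = \sum_{j=1}^M c_j \sum_{n\in\mathbb Z} \delta_{\lambda_j+nT},
\]
a generalized Dirac comb in the sense of \cite{Me16}. The crux of the argument is the second paragraph: upgrading $\varepsilon$-almost periodicity to exact periodicity. Without the finite-amplitude hypothesis one cannot collapse the continuous parameter $t$ to a discrete choice, and indeed the main content of the present paper is to exhibit crystalline measures for which this collapse genuinely fails once infinitely many amplitudes are permitted.
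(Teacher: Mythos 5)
The paper does not prove this theorem; it is stated as a quotation of Meyer's 1970 result and is used only once, in part (v) of the proof of Theorem~\ref{Thmu}, to rule out translation-boundedness of $|\hat\mu|$. So there is no in-house argument to compare against, and I will assess your proposal on its own terms. The opening reduction is the most serious problem. You assert that a translation-bounded positive measure with countable discrete support is automatically strongly almost periodic. This is false: take $\nu=\sum_{n\in\mathbb Z}c_n\delta_n$ with $c_n\in\{1,2\}$ a bounded nonnegative sequence that is not Bohr almost periodic; $\nu$ is translation bounded, positive, with countable discrete support, yet for a bump function $g$ supported near the origin the sequence $(\nu*g)(n)$ essentially reproduces $(c_n)$, so $\nu*g$ is not Bohr almost periodic and $\nu$ is not strongly almost periodic. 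The correct hypothesis in the Argabright--Gil de Lamadrid and Baake--Moody circle of ideas is that the \emph{Fourier transform} of $\nu$ be a pure point measure, not that $\nu$ itself be pure point. That hypothesis is actually available here, since $\hat{\hat\mu}$ is a reflection of $\mu$ and hence pure point, but as written your first paragraph deduces almost periodicity from the wrong premise, and the step as stated would fail for the counterexample above.

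Two further points need shoring up before the second paragraph can close the argument. First, you invoke uniform discreteness of $\Lambda$ as if it were a ``standard'' consequence of the hypotheses, but translation-boundedness of $\mu$ together with amplitudes bounded away from zero (which the finite value set gives) only controls the number of atoms per unit window, not the minimal gap; the support could a priori contain pairs of nearby points whose separation tends to zero while the density stays bounded. Establishing uniform discreteness is a genuine and nontrivial step of Meyer's proof, not a citation. Second, your upgrade from $\varepsilon$-almost period to exact period implicitly works with the translation-bounded measure seminorm $\sup_x|\cdot|(x+[0,1])$, i.e.\ with \emph{norm} almost periodicity, which is strictly stronger than the \emph{strong} almost periodicity that the Argabright--Gil de Lamadrid criterion delivers (the latter controls $\mu*g$ only in sup norm for each fixed $g\in C_c$). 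Passing from strong to norm almost periodicity again uses uniform discreteness of $\Lambda$ and the bounded amplitude set and needs its own argument. With those two gaps filled, the rigidification idea in your second paragraph --- small almost periods must fix both the discrete support and the finitely many admissible amplitudes, hence are exact periods, hence the group of periods is a closed, relatively dense, proper subgroup $T\mathbb Z$, forcing $\mu$ to be a generalized Dirac comb --- is sound.
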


Examples of varying complexity of Fourier quasicrystals which are not generalized Dirac combs, have been given (\cite{Me16,LeOl16,Ko16,RaVi19}), showing that any such classification is probably very difficult \cite{Dy09}.

A basic question which has been open for some time is whether there are positive (that is with $ a_\lambda \geq 0 $) crystalline measures which are not generalized Dirac combs?
The constructions in Sections 2 and 3 yield such
$ \mu$'s which enjoy some other properties which resolve related open problems.

In Section 2 we review the definition of stable polynomials and use them to construct positive Fourier quasicrystals.
In Section 3 we examine the simplest non-trivial example and use Liardet's proof of Lang's conjecture in dimension two \cite{La65,Li74}
to analyze the additive structure of $ \Lambda $, see Theorem \ref{ThLiardet}. This example is rich
enough for the purposes of this note.
We end the section by recording the general additive structure theorem from \cite{KuSa2} which applies to the supports
$ \Lambda $ of the Fourier quasicrystal measures $ \mu $ that are
constructed from stable polynomials.

\section{Summation formula}

\subsection*{Stable polynomials}

If $ P(\mathbf z) = P (z_1, \dots, z_n ) $ is a multivariable polynomial with complex coefficients, we say that
$ P $ is $ \mathbb D = \left\{  z: |z|<1 \right\} $ stable if $ P(\mathbf z) \neq 0 $ for
$ {\bf z} = (z_1, z_2, \dots, z_n ) $ with $ z_j \in \mathbb D $ for all $ j. $
To define a stable pair, consider the involution operation on $ P $
obtained by $ z_j \rightarrow 1/z_j $ for $ j = 1,2, \dots, n $, the result being denoted by $ P^\iota . $

\begin{definition*}
Two multivariate polynomials $ P, Q $ are said to form {\bf stable pair} if 
\begin{enumerate}
\item both polynomials $ P $ and $ Q $ are $ \mathbb D$-stable;
\item there exist an integer-valued vector $ \Bell = (\ell_1, \ell_2, \dots, \ell_n) \in \mathbb N^n $
and a constant $ \eta $ such that $ P $ and $ Q $ satisfy the functional equation
\begin{equation} \label{defQ}
Q(\mathbf z) = \eta \; z_1^{\ell_1} z_2^{\ell_2} \dots z_n^{\ell_n} P^\iota (\mathbf z);
\end{equation}
\item the normalization condition
$$ P(\vec{0})  = Q (\vec{0}) = 1 $$
is fulfilled.
\end{enumerate}
\end{definition*}

If such $ \Bell $ and $ \eta $ exists they are unique. \vspace{3mm}

Such stable pairs arise in many contexts and there are powerful techniques for proving stability \cite{BoBr09,Wa11}.
We point to two basic examples.

{\bf 1) } {\bf Spectral pairs} These come up as secular polynomials in quantum graphs \cite{BaGa,CdV}.
Let $ P_1, P_2, \dots, P_k $ be monomials in $ z_1, \dots, z_n $ of the form
\begin{equation}
P_j (\mathbf z) = z_1^{a_{j,1}} z_2^{a_{j,2}} \dots z_n^{a_{j,n}}, \quad j = 1,2, \dots, k,
\end{equation}
$ a_{j, \nu } \in \mathbb N. $
Let $ \ell_\nu = \sum_{j=1}^k a_{j,\nu}, $ which we assume being positive for every $ \nu = 1, \dots, n. $
If $ S $ is a $ k \times k $ unitary matrix, set
\begin{equation}
R_S (\mathbf z) =
\det \left[ \mathbb I_{k \times k} -
\left[
\begin{array}{cccc}
P_1(\mathbf z) & 0 & \dots & 0 \\
0 & P_2(\mathbf z) & \dots & 0 \\
\vdots & \vdots & \ddots & \vdots \\
0 & 0 & \dots & P_k (\mathbf z)
\end{array} \right] S \right].
\end{equation}
Then it is easy to see that
$$ P = R_S, \quad Q = R_{S^{-1}}, $$
is a stable pair with $  \Bell = (\ell_1, \dots, \ell_k) $ and $ \eta = (\det (-S) )^{-1}. $

 Our studies in \cite{KuSa2} were inspired by the trace formula for metric graphs
\cite{GuSm,KuJFA,KuArk}.

{\bf 2)} {\bf Lee-Yang pairs } (\cite{Ru}*{Theorem 5.12})
Let  $ -1 \leq A_{ij} \leq 1, A_{ij} = A_{ji} $ and
\begin{equation}
P(\mathbf z) = \sum_{S} \left(
\prod_{i \in S} \prod_{j \in S'} A_{ij}  \right) \mathbf z^S,
\end{equation}
where  we use multi-index notation for $ \mathbf z^S = \prod_{j \in S} z_j $, the sum is over all subsets $ S $ of $ \{ 1,2, \dots, n\} $ and $ S' $ is the complement of $ S$.
Then $ P $ is a self dual stable pair
\begin{equation}
(z_1 z_2 \dots z_n)\;  P^\iota (\mathbf z) = P(\mathbf z).
\end{equation}
For generalizations of these see \cite{YaLe2,BoBr09,Wa11}.

For the rest of this section we show how to attach to a stable pair
and real numbers $ b_1, b_2, \dots, b_n > 1 $ a crystalline measure.

\subsection*{Notations}
Assume that $ P , Q $ is a stable pair of multivariable polynomials:
\begin{equation}
P (\mathbf z) = 1 + \sum_{\mathbf m \in M_P} a_P (\mathbf m) \mathbf z^\mathbf m, \quad
Q (\mathbf z) = 1 + \sum_{\mathbf m \in M_Q} a_Q (\mathbf m) \mathbf z^\mathbf m,
\end{equation} 
where $ M_{P,Q} $ are finite subsets of
\begin{equation}
\mathbf Z_+^n := \left\{ \mathbf k = (k_1, k_2, \dots, k_n), k_j \in \mathbb Z, k_j \geq 0, \mathbf k \neq (0,0, \dots,0) \right\}.
\end{equation}
Taking the logarithm we get the following expansion
\begin{equation}
\begin{array}{ccl}
\log P(\mathbf z) & = & \displaystyle
\sum_{\nu =1}^\infty \frac{(-1)^\nu}{\nu} \left( \sum_{\mathbf m \in M_P} a_P (\mathbf m) \mathbf z^\mathbf m \right)^\nu \\
& = & \displaystyle
\sum_{\nu =1}^\infty \frac{(-1)^\nu}{\nu}
\sum_{\mathbf m_1, \mathbf m_2, \dots, \mathbf m_\nu \in M_P}
a_P (\mathbf m_1) a_P (\mathbf m_2) \dots a_P (\mathbf m_\nu) \mathbf z^{\mathbf m_1}  \mathbf z^{\mathbf m_2} \dots
 \mathbf z^{\mathbf m_\nu} \\
 & = & \displaystyle \sum_{\nu =1}^\infty \frac{(-1)^\nu}{\nu}
 \sum_{\mathbf k \in \mathbb Z_+^n} \sum_{\mathbf m_1 + \mathbf m_2 + \dots + \mathbf m_\nu = \mathbf k }
a_P (\mathbf m_1) a_P (\mathbf m_2) \dots a_P (\mathbf m_\nu) \mathbf z^{\mathbf k},
  \end{array}
\end{equation}
hence
\begin{equation} \label{logexp}
\log P (\mathbf z) = \sum_{\mathbf k \in \mathbb Z_+^n} c_P (\mathbf k) \mathbf z^\mathbf k,
\end{equation}
where for $ k \in \mathbb Z_+^n $
\begin{equation}
c_P (\mathbf k) = \sum_{\nu=1}^{\sum_{j=1}^n k_j}  \sum_{\mathbf m_1 + \mathbf m_2 + \dots + \mathbf m_\nu = \mathbf k }
\frac{(-1)^\nu a_P (\mathbf m_1) a_P (\mathbf m_2) \dots a_P (\mathbf m_\nu)}{\nu}.
\end{equation}
Similar formulas hold for $ \log Q(\mathbf z). $

\subsection*{Dirichlet series}

Let $ b_1, b_2, \dots, b_n $ be real numbers larger than $ 1 $ and let $ \xi_j = \ln b_j >0, \; j =1,2, \dots, n. $

Let us denote by $ \Gamma_+ $ and $ L_+ $ the corresponding multiplicative and additive semigroups

\begin{equation} \label{defL}
\begin{array}{ccl}
\Gamma_+ & = & \displaystyle 
 \left\{ b_1^{m_1} b_2^{m_2} \dots b_n^{m_n}: m_j \in \mathbb N \cup \{0\}  \right\} \setminus \{1\}; \\[5mm]
 L_+ & = & \displaystyle
  \log \Gamma_+ = \left\{ m_1 \xi_1 + m_2 \xi_2 + \dots + m_n \xi_n: m_j \in \mathbb N \cup \{0\}  \right\} 
\setminus \{0\}.
\end{array}
\end{equation}
The elements of these semigroups will be denoted by $ \mathbf b $ and $ \Bxi $ respectively
$$ \mathbf b \in  \Gamma_+ , \quad \Bxi \in L_+ .$$

Let us introduce the following two entire functions of order $1$
\begin{equation} \label{defF}
\begin{array}{ccll}
F(s) &  :=  & P (b_1^{-s}, b_2^{-s}, \dots, b_n^{-s} ) \equiv P(e^{-\xi_1 s},e^{-\xi_2 s}. \dots, e^{-\xi_n s}) , & \quad s \in \mathbb C. \\[3mm]
G(s) & := &  Q (b_1^{-s}, b_2^{-s}, \dots, b_n^{-s} )\equiv Q(e^{-\xi_1 s},e^{-\xi_2 s}. \dots, e^{-\xi_n s}), 
\end{array}
\end{equation}
The functions are related via the functional equation
$$ \begin{array}{ccl}
F(-s) & = & P (b_1^{s}, b_2^s , \dots , b_n^s) \\
& = & \displaystyle
P^{\iota} (b_1^{-s}, \dots, b_n^{-s}) \\
& = & \displaystyle   \eta^{-1}  \left(b_1^{\ell_1} \dots b_n^{\ell_n} \right)^s 
G(s)
\end{array}
$$
\begin{equation} \label{F-G}
\Rightarrow F(-s) = \eta^{-1}  \left( \mathbf b_1^{\Bell} \right)^s G(s),
\end{equation}
where $ \Bell = (\ell_1, \ell_2, \dots, \ell_n). $

The stability conditions on $ P $ and $ Q $
ensure that all zeroes of $ F(s) $ and $ G(s) $  are on the imaginary axis $ \Re (s) = 0. $
Moreover \eqref{F-G} implies that the zeroes for $ F $ an $ G $ are obtained from each other via reflection.

$ F $  and $ G $ are finite Dirichlet series, that is
\begin{equation}
F(s) = 1+ \sum_{\mathbf m \in  M_{P}} a_P (\mathbf m)
 ({\mathbf b}^{\mathbf m})^{-s}, \quad G(s) = 1+ \sum_{\mathbf m \in  M_{Q}} a_Q (\mathbf m)
 ({\mathbf b}^{\mathbf m})^{-s}.
\end{equation}

\subsection*{Logarithmic derivatives}

For $ \Re (s) $ large enough the series for $ \log F(s) $
converges absolutely:
\begin{equation} 
\log F(s) =  \sum_{\mathbf k \in \mathbb Z_+^n } c_P(\mathbf k)  e^{-(k_1 \xi_1 + k_2 \xi_2 + \dots + k_n \xi_n)s}  = 
 \sum_{\mathbf k \in \mathbb Z_+^n } c_P(\mathbf k)  e^{- (\Bxi \cdot \mathbf k)s}.
\end{equation}
Hence for $ \Re (s) $ large
\begin{equation}  \label{expF} 
\frac{F'(s)}{F(s)} = -  \sum_{\mathbf k \in \mathbb Z_+^n } (\Bxi \cdot \mathbf k)
c_P(\mathbf k)  e^{-(\Bxi \cdot \mathbf k)s}.
\end{equation}

A similar analysis can be applied to the entire function $ G(s) $ 
leading to
\begin{equation}  \label{expG} 
\frac{G'(s)}{G(s)} =  -  \sum_{\mathbf k \in \mathbb Z_+^n }  (\Bxi \cdot \mathbf k)
c_Q(\mathbf k)  e^{-(\Bxi \cdot \mathbf k)s}.
\end{equation}

Formula \eqref{F-G} establishes the following relation between the logarithmic derivatives of $ F $ and $G $
$$
\log F(-s) =  - \log \eta + s (\Bxi \cdot \Bell )  + \log G(s) 
$$
\begin{equation} \label{F-G2}
\Rightarrow - \frac{F'(-s)}{F(-s)} =(\Bxi \cdot \Bell)  + \frac{G'(s)}{G(s)} .
\end{equation}
Note that this relation is independent of the parameter $ \eta $ appeared first in \eqref{defQ}.

\subsection*{Logarithmic derivative as a distribution}

Let $ \Psi \in C_0^\infty (\mathbb R_{>0} ) $ and
\begin{equation}
\tilde{\Psi} (s) = \int_0^\infty \Psi (x) x^s \frac{dx}{x}.
\end{equation}
$ \tilde{\Psi} (s) $ is entire and is rapidly decreasing when $ |t| \rightarrow \infty $ for $ s = \sigma + it, $ $ \sigma $ fixed.

Consider the integral
\begin{equation} \label{I}
I :=  \displaystyle
\frac{1}{2 \pi i } \int_{\Re (s) = R}
\frac{F'(s)}{F(s)} \tilde{\Psi} (s) ds,
\end{equation}
which is converging for large real $ R. $ We next calculate $ I $  in two different ways
using the functional equation connecting $ F $ and $G $.

Expansion \eqref{expF} gives us
\begin{equation}  \label{I1}
\begin{array}{ccl}
I & = &  \displaystyle \frac{1}{2 \pi i} \int_{\Re (s) = R}
\left( - \sum_{\mathbf k  \in \mathbb Z_+^n  }  (\Bxi \cdot \mathbf k) c_P (\mathbf k) e^{- (\Bxi \cdot \mathbf k) s} 
 \right) \tilde{\Psi} (s) ds  \\
& = & \displaystyle 
- \sum_{\mathbf k  \in \mathbb Z_+^n  }  (\Bxi \cdot \mathbf k) c_P(\mathbf k) 
 \frac{1}{2 \pi i }  \int_{\Re(s) = R }  \tilde{\Psi}(s) e^{- (\Bxi \cdot \mathbf k)s} ds.
\end{array}
\end{equation}

To get the second representation we shift the contour for the integral defining $ I $ to $ \Re (s) = - R $ picking up
the residues, which are $ \tilde{\Psi} (\rho) $, since the function $ \tilde{\Psi} $
is integrated with the logarithmic derivative. 
 Summing over all zeroes of $ F $ (which are lying on the imaginary axis)
we obtain
\begin{equation}
\sum_{\rho: F(\rho) = 0 } \tilde{\Psi} (\rho),
\end{equation}
hence
$$
\begin{array}{ccl}
I  & = & \displaystyle \sum_{\rho: F(\rho) = 0 } \tilde{\Psi} (\rho) + \frac{1}{2\pi i}
\int_{\Re (s) = -R} \frac{F'(s)}{F(s)} \tilde{\Psi} (s) ds  \\
& = & \displaystyle
\sum_{\rho: F(\rho) = 0 } \tilde{\Psi} (\rho) + \frac{1}{2\pi i}
\int_{\Re (s) = R} \frac{F'(-s)}{F(-s)} \tilde{\Psi} (-s) ds
\end{array}
$$

Formula \eqref{F-G2} together with expansion \eqref{expG} then imply
\begin{equation} \label{I2}
\begin{array}{ccl}
I & = & \displaystyle
\sum_{\rho: F(\rho) = 0 }   \tilde{\Psi} (\rho) -   (\Bxi \cdot \Bell)  \frac{1}{2\pi i}
 \int_{\Re(s) = R} \tilde{\Psi} (-s) ds  \\[3mm]
 & & \displaystyle + \sum_{\mathbf k  \in \mathbb Z_+^n }
 (\Bxi \cdot \mathbf k) c_Q (\mathbf k)   \frac{1}{2 \pi i}
\int_{\Re(s) = R} \tilde{\Psi}(-s) e^{- (\Bxi \cdot \mathbf k) s}  ds 
\end{array}
\end{equation}
Comparing two formulas for $ I $  (expressions \eqref{I1} and \eqref{I2}) we may calculate the sum over the zeroes of $ F $
\begin{equation} \label{sum1}
\begin{array}{ccl}
\displaystyle \sum_{\rho: F(\rho) = 0 }    \tilde{\Psi} (\rho) & = &
\displaystyle
 (\Bxi \cdot \Bell)  \frac{1}{2\pi i} \int_{\Re(s) = R} \tilde{\Psi} (s) ds \\
&& \displaystyle 
- \sum_{\mathbf k \in \mathbb Z_+^n }  (\Bxi \cdot \mathbf k)  c_P (\mathbf k)  \frac{1}{2\pi i}
 \int_{\Re(s) = R}  \tilde{\Psi}(s) e^{- (\Bxi \cdot \mathbf k) s} ds \\
& & \displaystyle
- \sum_{\mathbf  k \in \mathbb Z_+^n }  (\Bxi \cdot \mathbf k) c_Q(\mathbf k)  \frac{1}{2\pi i}
 \int_{\Re(s) = R}  \tilde{\Psi}(-s)  e^{- (\Bxi \cdot \mathbf k) s} ds
\end{array}
\end{equation}
\subsection*{Summation formula}
We make change of variables:
$$ \begin{array}{l}
\displaystyle x = e^{t};  \\ (0,+\infty) \rightarrow (-\infty, + \infty), 
\end{array} $$
so that
$$ \Psi (e^t) = h(t) $$
for a certain $ h \in C_0^\infty (\mathbb R). $
We have in particular:
$$
\tilde{\Psi} (i \gamma) = \int_0^\infty \Psi (x) x^{i \gamma} \frac{dx}{x} = \left[
\begin{array}{c}
x = e^t \\
dx = e^t dt
\end{array} \right] = \int_{-\infty}^\infty h(t) e^{i \gamma t} dt = \hat{h} (\gamma), $$
where $ \hat{h} $ is the Fourier transform of $ h $
and
$$
\begin{array}{ccl}
\displaystyle \frac{1}{2 \pi i} \int_{\Re (s) = R} \tilde{\Psi} (s) e^{- (\Bxi \cdot \mathbf k) s}  ds  & = &
\displaystyle
\frac{1}{2 \pi i} \int_{\Re ( s) = R}  \left( \int_0^\infty \Psi(x) x^s \frac{dx}{x} \right) e^{- (\Bxi \cdot \mathbf k) s} ds  \\[3mm]
& = & \displaystyle
\frac{1}{2 \pi } \int_{- \infty}^{+ \infty} \left(
\int_{-\infty}^\infty h(t) e^{(R+is) t} dt \right) e^{- (\Bxi \cdot \mathbf k) R } e^{-i  (\Bxi \cdot \mathbf k) s } ds  \\[3mm]
& = & h ( \Bxi \cdot \mathbf k).
\end{array}  $$

Then formula \eqref{sum1} becomes the following summation formula
\begin{equation} \label{sum2}
\sum_{\gamma: F(i \gamma) = 0}  \hat{h} (\gamma) =
 (\Bxi \cdot \Bell) h(0)
- \sum_{\mathbf k \in \mathbb Z_+^n }
 (\Bxi \cdot \mathbf k) c_P (\mathbf k) h  (\Bxi \cdot \mathbf k) -
  \sum_{\mathbf k \in \mathbb Z_+^n }
 (\Bxi \cdot \mathbf k) c_Q (\mathbf k) h  (-\Bxi \cdot \mathbf k)
 \end{equation}
which is valid for any $ \hat{h} \in C_0^\infty (\mathbb R), $
and extends to all of $ \mathcal S (\mathbb R) $
as shown in the proof of Theorem \ref{Th1} below.

To be precise,  introducing the discrete support set
$$ \Lambda_P :=
\{ \gamma: F(i \gamma) = 0 \}  $$
obtained from the zero set of $ F $ (all lying on
the imaginary axis) we define the discrete measure associated 
with 
the left hand side of \eqref{sum2}
\begin{equation} \label{defmu}
\mu_P :=  \sum_{\gamma: F(i\gamma) = 0} \delta_\gamma \equiv \sum_{\lambda \in \Lambda_P} m(\lambda) \delta_\lambda,
\end{equation}
where $ m(\lambda) $ is the multiplicity of the corresponding zero.

 Then the spectrum $ S_P $ of $ \mu $ is a subset of
$$ L_+ \cup - L_+ \cup \{ 0 \} .$$
(with $ L_+ $ introduced in \eqref{defL}) and the Fourier transform of $ \mu $ 
can be written as
\begin{equation} \label{defmuhat}
\hat{\mu} =  (\Bxi \cdot \Bell)  
- \sum_{\mathbf k \in \mathbb Z_+^n }
 (\Bxi \cdot \mathbf k) c_P (\mathbf k)  \delta_{  \Bxi \cdot \mathbf k}
 -  \sum_{\mathbf k \in \mathbb Z_+^n }
 (\Bxi \cdot \mathbf k) c_Q (\mathbf k)  \delta_{ - \Bxi \cdot \mathbf k}.
\end{equation}

\begin{theorem} \label{Th1}
Given any pair $ P, Q $ of stable polynomials  satisfying assumptions (1) and (2) 
the measure $ \mu $ is a positive crystalline measure, in fact a Fourier
quasi-crystal and is an almost periodic measure.
\end{theorem}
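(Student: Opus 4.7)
The plan is to start from identity \eqref{sum2} — which has already been derived for $h \in C_0^\infty(\mathbb R)$ — and promote it to all Schwartz $h$ by showing both sides extend continuously to $\mathcal S(\mathbb R)$. Positivity of $\mu$ is immediate from \eqref{defmu}, since the multiplicities $m(\lambda)$ are positive integers, and the stability of $P$ places all zeros of $F$ on the imaginary axis, so $\Lambda_P$ is a discrete subset of $\mathbb R$. For the left-hand side I would first establish a linear counting bound $\#\{\gamma \in \Lambda_P : |\gamma| \leq T\} = O(T)$ with constant determined by $\Bxi \cdot \Bell / (2\pi)$, using that $F(s)$ is an entire exponential polynomial of order one and finite exponential type (Jensen's formula, or the classical zero-density estimate for almost periodic functions). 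This makes $\mu$ a translation-bounded positive measure and ensures $\sum_\gamma \hat h(\gamma)$ converges absolutely for every $\hat h \in \mathcal S(\mathbb R)$.

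For the right-hand side I would establish polynomial-in-$|\mathbf k|$ bounds on the log-coefficients $c_P(\mathbf k)$ and $c_Q(\mathbf k)$. Stability of $P$ together with $P(\vec 0) = 1$ makes $\log P$ holomorphic on the open polydisc $\mathbb D^n$, and Cauchy estimates on polyradii $r<1$ give $|c_P(\mathbf k)| \leq C_r r^{-|\mathbf k|}$. The functional equation \eqref{defQ} combined with the stability of $Q$ then confines any zeros of $P$ on $\overline{\mathbb D}^n$ to the distinguished torus $\mathbb T^n$, and a Hardy-class analysis of $\log P$ near those zeros upgrades the Cauchy estimate to a polynomial bound $|c_P(\mathbf k)| = O((1+|\mathbf k|)^N)$. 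Given Schwartz decay of $h$, both sums on the right-hand side of \eqref{sum2} then converge absolutely and define tempered distributions supported on the discrete sets $\pm L_+$. Density of $C_0^\infty$ in $\mathcal S$ propagates \eqref{sum2} to all of $\mathcal S(\mathbb R)$, identifying $\hat\mu$ with \eqref{defmuhat} and proving that $\mu$ is a crystalline measure.

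The Fourier quasicrystal property is then automatic: $|\mu| = \mu$ is tempered by positivity, and $|\hat\mu|$ is translation-bounded because $\pm L_+ \cup \{0\}$ meets each bounded interval in finitely many points whose coefficients grow at most polynomially. For the almost periodicity of $\mu$ as a measure, I would use \eqref{defmuhat} directly: for any $\phi \in C_c^\infty(\mathbb R)$, the convolution $\mu * \phi$ is the inverse Fourier transform of $\hat\mu \cdot \hat\phi$, namely a sum $\sum_{\mathbf k} (\Bxi \cdot \mathbf k) c_P(\mathbf k) \hat\phi(\Bxi \cdot \mathbf k) e^{i(\Bxi \cdot \mathbf k) x}$ plus its mirror, absolutely convergent by Schwartz decay of $\hat\phi$ against the polynomial growth of $c_P$. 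Hence $\mu * \phi$ is a uniform limit of trigonometric polynomials, i.e.\ Bohr almost periodic, making $\mu$ a strongly almost periodic measure.

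The main obstacle I expect is the polynomial bound on $c_P(\mathbf k)$. Naive Cauchy estimates on the open polydisc give $|c_P(\mathbf k)| \leq C_r r^{-|\mathbf k|}$ with $C_r \to \infty$ as $r \to 1$ whenever $P$ vanishes on $\mathbb T^n$, which is the generic situation (e.g.\ in the Lee–Yang examples of Section 2). Controlling this blow-up using only $\mathbb D$-stability together with the symmetry \eqref{defQ}, and thereby quantifying the boundary behaviour of $\log P$ on the distinguished torus, is the delicate analytic ingredient; the remainder is a routine consequence of the summation identity already set up in \eqref{sum2}.
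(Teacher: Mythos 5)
Your overall architecture matches the paper's: derive the summation formula for $h\in C_0^\infty$, extend to $\mathcal S(\mathbb R)$, establish temperedness of $\mu$ and $|\hat\mu|$ separately, and then conclude almost periodicity. You also correctly locate the crux of the matter in bounding the log-coefficients $c_P(\mathbf k)$. But that step is exactly where your proposal stops being a proof: you write that ``a Hardy-class analysis of $\log P$ near those zeros upgrades the Cauchy estimate to a polynomial bound,'' and then acknowledge in your closing paragraph that controlling the blow-up of the Cauchy constant as $r\to 1$ ``is the delicate analytic ingredient.'' That ingredient is not supplied, so the proof is incomplete at its central point.

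The paper closes this gap with an elementary argument that in fact yields a \emph{uniform} bound $|c_P(\mathbf k)|\leq C$, stronger than the polynomial growth you aim for. Write $\log P(\mathbf z)=\ln|P(\mathbf z)|+i\arg P(\mathbf z)$, with the branch of $\arg$ chosen by continuous variation along the segment $\{s\mathbf z:0\leq s\leq 1\}$. Since $P(s\mathbf z)$ is a one-variable polynomial in $s$ of degree $\deg P$, the winding number is bounded and $|\arg P(\mathbf z)|\leq\pi\deg P$ uniformly on $\mathbb D^n$. For the real part, $\ln|P(\mathbf z)|\leq\ln K$ with $K=\sup_{\mathbb D^n}|P|$; and because $P(\vec 0)=1$, the expansion \eqref{logexp} has no constant term, so $\int_{\mathbb T^n}\ln|P(re^{i\Btheta})|\,d\Btheta=0$ for each $r<1$. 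Combining the one-sided bound with the vanishing mean yields $\int_{\mathbb T^n}\big|\ln|P(re^{i\Btheta})|\big|\,d\Btheta\leq 2\ln K$, uniformly in $r$. The Fourier coefficient identity $r^{|\mathbf k|}c_P(\mathbf k)=\int_{\mathbb T^n}\log P(re^{i\Btheta})e^{-i\mathbf k\cdot\Btheta}\,d\Btheta$ then gives $r^{|\mathbf k|}|c_P(\mathbf k)|\leq 2\ln K+\pi\deg P$, and letting $r\to1$ produces the desired constant bound. No Hardy-space machinery, boundary-regularity analysis of $\log P$, or classification of the zero locus on $\overline{\mathbb D}^n$ is needed. (Your intermediate claim that stability of $P$ and $Q$ forces all zeros on $\overline{\mathbb D}^n$ to lie on the distinguished torus $\mathbb T^n$ is also not something the paper asserts or requires, and it would need its own justification.)

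Two smaller differences are worth noting. For translation-boundedness of $\mu$, you appeal to a Jensen-type zero count for $F$; the paper instead applies the summation formula itself to a test function $\phi_0\geq 1$ on $[-1,1]$ with $\hat\phi_0$ supported in a gap of $L_+\cup(-L_+)$, which is shorter and stays entirely within the formula already derived. For almost periodicity, you sketch a direct argument showing $\mu*\phi$ is a uniform limit of trigonometric polynomials; the paper simply invokes Theorem~11 of \cite{Fa19}, which gives the conclusion for translation-bounded measures with countable spectrum. Your route is plausible but would need care to justify uniform (not merely absolute) convergence of the series for $\mu*\phi$; citing \cite{Fa19} is cleaner and is what the paper does.
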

\begin{proof}
The support of $ \mu $ is given by the zeroes $ i \gamma_j $  of the entire function $ F $
 in \eqref{defF} and hence the support $ \Lambda $ of $ \mu $ is discrete. The support $ S $
of $ \hat{\mu}$ is a subset of $ L_+ \cup - L_+ \cup \{ 0\} $ which is also discrete.
Since $ m(\lambda) \geq 1 $
and  $ \mu $ is positive, applying the summation formula 
to $ \phi (y) = \phi_0(x-y) $ with $ \phi_0 \geq 0, \phi_0\geq1 $ on $ [-1,1] $
and $ \hat{\phi}_0 $ having compact support in $ (- \epsilon_0, \epsilon_0) $
where $ (-\epsilon_0, \epsilon_0) \cap (L_+ \cup - L_+) $ is empty,  yields
$ \displaystyle \sum_{\lambda: x-1 \leq \lambda \leq x+1} m(\lambda) \ll
(\Bxi \cdot \Bell) \hat{\phi}_0(0), $ uniformly in $ x. $
That is $ \mu = |\mu| $ is translation bounded and in particular $ \mu $
and hence $ \hat{\mu} $ are both tempered. This shows that $ \mu $ is 
 a crystalline measure. To show that it is a Fourier quasicrystal
we need to show in addition that $ | \hat{\mu}| $ is tempered (since $ \mu = | \mu|$).
To this end we first bound the coefficients $ c_P (\mathbf k) $ in  \eqref{logexp}. 
The series in \eqref{logexp} converges absolutely and uniformly for $ \mathbf z $
in compact subsets of $ \mathbb D^n = \mathbb D \times \mathbb D \times \dots \times \mathbb D, $
and yields $ \log P(\mathbf z) = \ln | P(\mathbf z) | + i \arg P(\mathbf z)$ where the $ \arg $
is gotten by continuous variation along the path $ \{ s \mathbf z \}, 0 \leq s \leq 1. $
Since $ P (s \mathbf z) $, as a function of $ s $ is a polynomial in $ s $ of degree $ \deg P $, it follows that
\begin{equation} \label{A}
| \arg P(\mathbf z) | \leq \pi (\deg P). 
\end{equation}
Let $  \displaystyle K = \sup_{\mathbf z \in \mathbb D^n} | P(\mathbf z) | $, then
\begin{equation} \label{B}
\ln | P(\mathbf z)| \leq  \ln K, \quad \mbox{for $ z \in \mathbb D^n$}.
\end{equation}
 Introducing  the notation 
$$ e^{i \Btheta} = (e^{i \theta_1}, e^{i \theta_2}, \dots, e^{i \theta_n} ) $$
we have from \eqref{logexp} that for $ 0 \leq r < 1 $ 
\begin{equation} \label{*}
\int_{\mathbb T^n} \log P ( r e^{i \Btheta}) e^{- i \mathbf k \cdot \Btheta} d \Btheta =
r^{[\mathbf k|} c_P (\mathbf k).
\end{equation}
In particular for $ k = 0 $
\begin{equation} \label{D}
\int_{\mathbb T^n} \ln | P ( r e^{i \Btheta}) | d \Btheta = 0,
\end{equation}
since the constant term is absent in \eqref{logexp}.
According to \eqref{B} 
$$ \ln | P ( r e^{i \Btheta}) | - \ln K \leq 0 $$
and hence
\begin{equation} \label{C}
\begin{array}{ccl}
\displaystyle  \int_{\mathbb T^n} 
\left\vert \ln | P ( r e^{i \Btheta}) | \right\vert d \Btheta
& = &
\displaystyle \int_{\mathbb T^n} \Big\vert \ln | P (r e^{i \Btheta})| - \ln K + \ln K \Big\vert d \Btheta \\[5mm]
& \leq &
\displaystyle - \int_{\mathbb T^n} \Big( \ln | P( r e^{i \Btheta}) | - \ln K \Big) d \Btheta + \ln K \\[5mm]
& = & 2 \ln K
\end{array}
\end{equation}
by \eqref{D}. From \eqref{*} and the $ r $ independent bounds \eqref{A} and \eqref{C} we deduce
\begin{equation}
\left\vert  c_P (\mathbf k) \right\vert \leq C < \infty.
\end{equation}
From \eqref{defmuhat}, it follows that the measure $ | \hat{\mu} | $ satisfies
\begin{equation}
\begin{array}{ccl}
| \hat{\mu} | ([-A,A])  & \leq &  \displaystyle \Bxi \cdot \Bell + 2 \sum_{\mathbf k \in \mathbb Z_+^n, \Bxi \cdot \mathbf k \leq A}
(\Bxi \cdot \mathbf k)  | c_P (\mathbf k) |  \\
& \leq &  \displaystyle \Bxi \cdot \Bell + 2 \max \{ \xi_j \} C \sum_{\mathbf k \in \mathbb Z_+^n, \; k_1 +k_2 +\dots + k_n \leq A/\min \{ \xi_j\}}
(k_1 + k_2 + \dots + k_n) \\
& \leq & \displaystyle
\Bxi \cdot \Bell + 2 \max \{ \xi_j \} C \left( \left[ \frac{A}{\min \{ \xi_j \}} \right] + 1 \right)^{n+1}.
\end{array}
\end{equation}
Hence $ | \hat{\mu} | ([-A,A])   $ grows at most polynomially ($ \sim A^{n+1}$)
and
therefore determines a tempered distribution.

To complete the proof we invoke  Theorem 11 of \cite{Fa19} which asserts that our translation bounded $ \mu $
which has countable spectrum is an almost periodic measure in the sense of \cite{Me16}*{Definition 5}.
\end{proof}

\subsection*{Remarks}
\begin{itemize}
\item  Starting with the function $ G $ instead of $ F $ we get a similar summation  formula
\begin{equation}
\sum_{\gamma: F(i \gamma) = 0}  \hat{h} (-\gamma) =
 (\Bxi \cdot \Bell) h(0)
- \sum_{\mathbf k \in \mathbb Z_+^n }
 (\Bxi \cdot \mathbf k) c_Q (\mathbf k) h  (\Bxi \cdot \mathbf k) -
  \sum_{\mathbf k \in \mathbb Z_+^n }
 (\Bxi \cdot \mathbf k) c_P (\mathbf k) h  (-\Bxi \cdot \mathbf k).
\end{equation}
Summing the two formulas we get
\begin{equation}
\sum_{\gamma: F(i \gamma) = 0}  \Big( \hat{h} (\gamma) + \hat{h} (-\gamma) \Big) =
2  (\Bxi \cdot \Bell) h(0)
- \sum_{\mathbf k \in \mathbb Z_+^n }
 (\Bxi \cdot \mathbf k)  \Big( c_P (\mathbf k) + c_Q (\mathbf k) \Big) \Big( h  (\Bxi \cdot \mathbf k) + h  (-\Bxi \cdot \mathbf k)
 \Big).
\end{equation}

\item In the self dual case $ P({\bf z} ) = Q({\bf z}) $ the summation formula takes the simplest form
\begin{equation} \label{sumsym}
\sum_{\gamma: F(i \gamma) = 0}  \hat{h} (\gamma) =
 (\Bxi \cdot \Bell) h(0)
- \sum_{\mathbf k \in \mathbb Z_+^n }
 (\Bxi \cdot \mathbf k) c_P (\mathbf k) \Big( h  (\Bxi \cdot \mathbf k) + h  (-\Bxi \cdot \mathbf k) \Big).
\end{equation}

\item The simplest stable polynomial is
$$ P(z_1) = 1-z_1 . $$
For it
$$ \begin{array}{ccl}
\displaystyle Q(z_1) & = & \displaystyle  z_1-1; \\
 \displaystyle F(s) & = &  \displaystyle 1- 1/b_1^s; \\
  \displaystyle \gamma_n & = &  \displaystyle \frac{2 \pi}{\xi_1} n, \quad n \in \mathbb Z; \\
 \displaystyle \log F(s) & = &  \displaystyle \log (1- \frac{1}{b_1^s})  = - \sum_{n=1}^\infty \frac{1}{n} \frac{1}{(b_1^n)^s}; \\
  \displaystyle \frac{F'}{F} (s) & = &  \displaystyle \sum_{n=1}^\infty \frac{1}{(b_1^n)^s} \xi_1; \\
 \end{array}
 $$
Substitution into the summation formula \eqref{sum2} gives
\begin{equation}
\sum_{n \in \mathbb Z} \hat{h} \big( \frac{2\pi}{\xi_1} n \big) =
\xi_1 \left(  h(0) + \sum_{n=1}^\infty  \Big( h (n \xi_1) +   h (- n \xi_1)  \Big)\right) \equiv \xi_1 \sum_{n \in \mathbb Z}  h (n \xi_1) ,
\end{equation}
which is nothing else than the classical Poisson summation formula (properly scaled) (see \eqref{Psf} below).

\end{itemize}

\section{The first non-trivial example}   \label{SexEx}

Our goal in this section is to present an explicit example of a positive crystalline measure.
 Consider the following polynomial
\begin{equation} \label{eq22}
P (z_1, z_2) = 1 - \frac{1}{3} z_1 + \frac{1}{3} z_2^2 - z_1 z_2^2 ,
\end{equation}
 in fact describing the non-linear part of the spectrum of the lasso graph \cite{KuSa2}.
With $ \ell_1 = 1 $, $ \ell_2 = 2 $ and $ \eta = - 1$ we get
$$ Q(z_1, z_2) =  (-1) \Big( z_1 z_2^2 - \frac{1}{3} z_2^2 + \frac{1}{3} z_1 - 1\Big) \equiv  P(z_1, z_2). $$
The polynomial is $ \mathbb D$-stable since the equation
$ P(z_1, z_2) = 0 $
can be writen as
$$ \frac{z_1 -3}{1-3 z_1} = z_2^2 $$
and the M\"obius transformation $ z_1 \mapsto \frac{z_1-3}{1-3 z_1}  $ maps  the unit disk to its complement.

The Dirichlet series is equal to
$$ F(s) = 1 - \frac{1}{3} \frac{1}{b_1^s} + \frac{1}{3} \frac{1}{b_2^{2s}} - \frac{1}{b_1^s b_2^{2s}} . $$
$$\begin{array}{ccl}
\displaystyle \log F(s) & = & \displaystyle - \sum_{k=1}^\infty  \frac{1}{k} \Big(\frac{1}{3} \frac{1}{b_1^s} - \frac{1}{3} \frac{1}{b_2^{2s}} + \frac{1}{b_1^s b_2^{2s}} \Big)^k  \\
& = & \displaystyle  \sum_{(n_1,n_2) \in \mathbb Z_+^2} c (n_1, 2 n_2) \frac{1}{b_1^{n_1 s} b_2^{2n_2s}},
\end{array}
$$
with
\begin{equation} \label{c_n}
c(n_1, 2 n_2) = - \sum_{\stackrel{\scriptstyle  k_1, k_2, k_3 \in \mathbb N \cup \{0\}}{\stackrel{\scriptstyle k_1 + k_3 = n_1}{\scriptstyle
k_2+k_3= n_2}}} \frac{(k_1+k_2 +k_3-1)!}{k_1!\;  k_2! \; k_3!} \frac{(-1)^{k_2}}{3^{k_1+k_2}}. 
\end{equation}
To determine the zero set of $ F(s) $ let us first describe the zero set of $ P $ on the unit torus $ \mathbb T = \left\{ (z_1, z_2) \in \mathbb C^2:
|z_1 | = |z_2| = 1 \right\}. $
Introducing notations $ z_1 = e^{i x}, z_2 = e^{iy} $ the same torus  can be seen as the square $ [0,2\pi] \times [0, 2 \pi] $
with the opposite sides identified.

Then the zero set is described by the Laurent polynomial
$$
 L (x, y)  = 3 \sin ( \frac{x}{2} + y) + \sin (\frac{x}{2} - y) 
 $$
and is  plotted in Figure \ref{Fig22}.
\begin{figure}[H]
\includegraphics[width=0.35\textwidth]{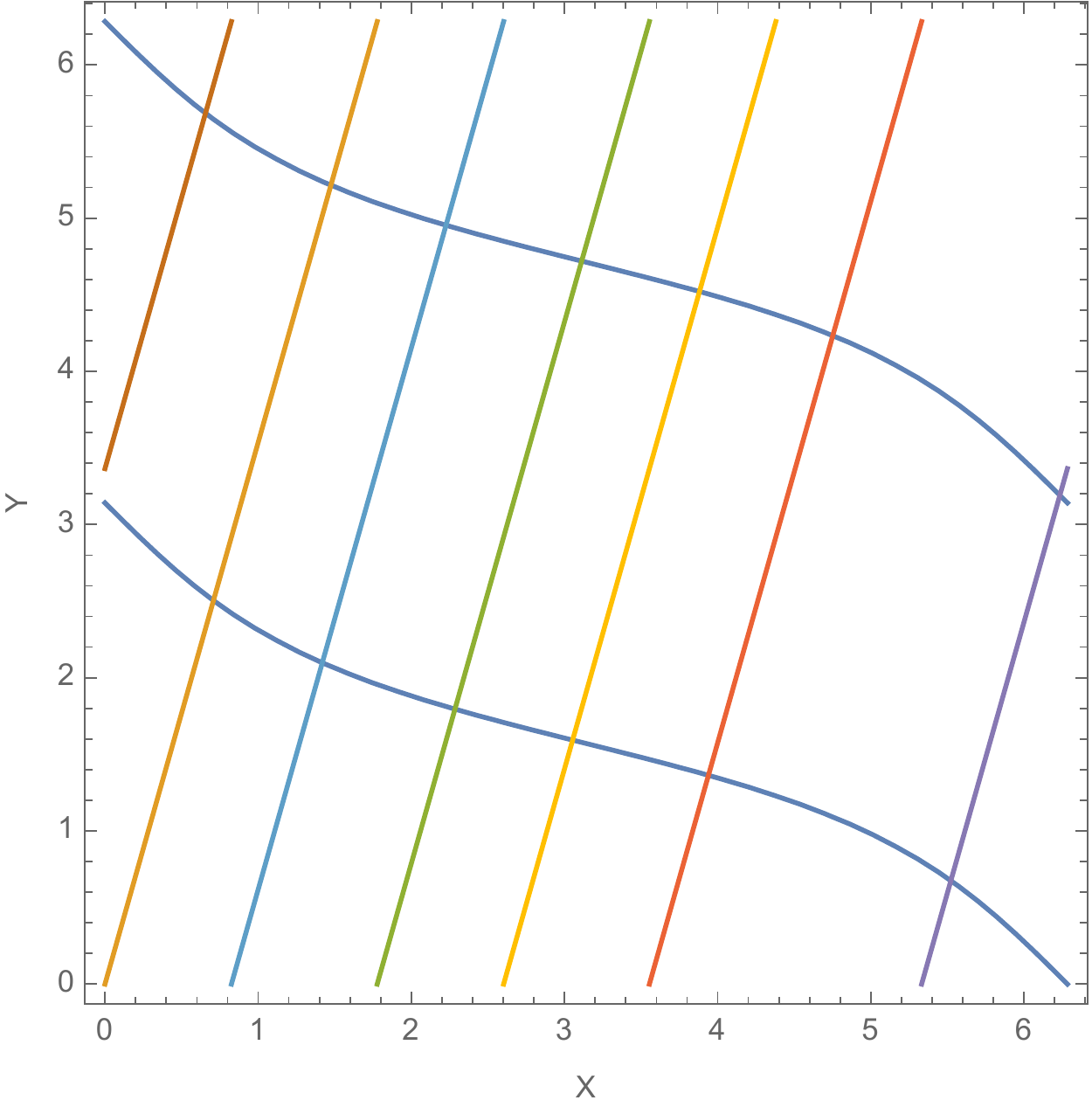}
\caption{Zero set for $ L (x, y)$.} \label{Fig22}
\end{figure}  
Note that the normal to the curve always  lies in the first quadrant, in fact
$$ 
\begin{array}{ccl}
\displaystyle \frac{\partial y}{\partial x} & = & \displaystyle - \frac{\frac{\partial L(x,y)}{\partial x}}{\frac{\partial L(x,y) }{\partial y}} \\
& = & \displaystyle - \frac{1}{2} \frac{3 \cos (x/2+y) + \cos (x/2-y)}{ 3 \cos (x/2+1) - \cos (x/2-y)} \\
& = & \displaystyle - \frac{1}{2} \frac{8}{(3 \cos (x/2+y) - \cos (x/2-y))^2} \\
& < & 0 ,
\end{array}$$
where we used that $ L(x,y) = 0. $

Knowing the zero set of $ L(x,y) $ the zeroes of the Dirichlet series $ F(s) $ (all lying on the imaginary axis) are obtained in the following way:
$$ 0 = F(i \gamma_j) = P ( b_1^{i\gamma_j}, b_2^{i \gamma_j}) = P ( e^{i \gamma_j \xi_1}, e^{i \gamma_j \xi_2}) =
L ( \gamma_j \xi_1, \gamma_j \xi_2) $$
\begin{equation} \label{sec}
\Leftrightarrow 
3 \sin \left(  (\frac{\xi_1}{2} + \xi_2) \gamma_j \right)+ \sin \left(  (\frac{\xi_1}{2} - \xi_2) \gamma_j \right)  = 0,
\end{equation}
where we used that $ \xi_j = \ln b_j > 0. $
In other words, zeroes of $ F $ are situated at the intersection points between the line  $ ( \gamma \xi_1, \gamma \xi_2)  $
and the zero curve for $ L. $  
Both the normal to the zero curve and the guide vector for the line belong to the first quadrant, hence the 
intersection is never tangential.
This implies in particular that all zeroes are simple.
$ \gamma_0 = 0 $ is always a solution since $ L(0,0) = 0. $ All other zeroes $ \gamma_j $ indicate the
distance between the intersection points and the origin measured along the line.
It is clear that $ L(-x,-y) = - L(x,y) $ (which also follows from \eqref{F-G} and the fact that  $ F = G $ in the
current example) implying that the zeroes are symmetric with respect to the origin.

The summation formula \eqref{sum2} takes the form
\begin{equation} \label{sum22}
\begin{array}{ccl}
\displaystyle \sum_{\gamma_j}  \hat{h} (\gamma_j) & = &
\displaystyle 
\left(\xi_1 + 2 \xi_2 \right) h(0) \\
& & \displaystyle 
-  \sum_{  \mathbf n = (n_1,n_2) \in \mathbb Z_+^2  }  
c(n_1, 2 n_2) (n_1 \xi_1 + 2 n_2 b_2) \Big(h (n_1 \xi_1 + 2 n_2  \xi_2) \\[-5mm]
& & \hspace{50mm} + h (-(n_1 \xi_1 + 2 n_2 \xi_2)) \Big),
\end{array}
\end{equation}
where
\begin{itemize}
\item $ \gamma_j $ are solutions to the secular equation \eqref{sec};
\item $ c(n_1, 2 n_2) $ are given by \eqref{sec};
\item $ h \in C_0^\infty (\mathbb R) $ - an arbitrary test function.
\end{itemize}
The difference between formula \eqref{sum22} and the general formula \eqref{sum2} is due to the fact that the stable polynomials depend just on $ z_2^2 $. 

Both series on the left and right hand sides are infinite but they have different properties depending on whether
$ \xi_1 $ and $ \xi_2 $ are rationally dependent or not. This is related to the number of intersection points on the
torus.
Also the number of zeroes $ i \gamma_j $ is always infinite, the number of intersection points
on the torus may be finite. Indeed, if $ \frac{\xi_1}{  \xi_2} \in \mathbb Q $, then
the line is periodic on the torus, implying that there are finitely many intersection points (on the torus).
 The points $ \gamma_j $ form a
periodic sequence implying that obtained  summation formula is just a finite sum of Poisson summation formulas
with the same period and $ \mu $ is a generalized Dirac comb.

Next we
assume that $ \xi_1 $ and $ \xi_2 $ are rationally independent
\begin{equation}
\frac{ \xi_1 }{ \xi_2} \notin \mathbb Q. 
 \end{equation}
By Kronecker's theorem the line covers the torus densely and therefore the intersection points $ (\gamma_j \xi_1, \gamma_j \xi_2) $
cover densely the zero curve of $ L $ as well. We are interested in the rational dependence  of
 $  \gamma_j, j \in \mathbb Z. $ In particular we shall need the following 
 
 \begin{lemma} \label{Le1}
 If $ \xi_1 $ and $ \xi_2 $ are rationally independent, then
 the secular equation \eqref{sec}
  $$ L (\gamma \xi_1, \gamma \xi_2) = 0 $$ has infinitely many
 rationally independent solutions, {\it i.e.}
 \begin{equation}
 \dim_{\mathbb Q} \mathcal L_{\mathbb Q} \{ \gamma_j \}_{j \in \mathbb Z} = \infty,
 \end{equation}
 where $ \mathcal L_{\mathbb Q} $ denotes the linear span with rational coefficients and
 $ \dim_{\mathbb Q} $ the dimension of the vector space with respect to the field $ \mathbb Q. $ 
 \end{lemma}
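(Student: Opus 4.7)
The plan is to argue by contradiction using Liardet's theorem (Lang's conjecture on $\mathbb G_m^2$). Suppose $\dim_{\mathbb Q}\mathcal L_{\mathbb Q}\{\gamma_j\}_{j\in\mathbb Z}=d<\infty$ and pick a $\mathbb Q$-basis $\gamma_{j_1},\dots,\gamma_{j_d}$ drawn from among the $\gamma_j$'s. Let $\Gamma_0\subset(\mathbb C^*)^2$ be the finitely generated subgroup generated by the pairs $(e^{i\gamma_{j_k}\xi_1}, e^{i\gamma_{j_k}\xi_2})$, $k=1,\dots,d$. Writing $\gamma_j=\sum_k q_{jk}\gamma_{j_k}$ with $q_{jk}\in\mathbb Q$ and clearing denominators by a suitable $N_j\in\mathbb N$, one has $(e^{i\gamma_j\xi_1}, e^{i\gamma_j\xi_2})^{N_j}\in\Gamma_0$. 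Hence every point $(e^{i\gamma_j\xi_1}, e^{i\gamma_j\xi_2})$ lies in the division closure $\Gamma\supset\Gamma_0$ inside $(\mathbb C^*)^2$; this $\Gamma$ still has finite rank, since $\Gamma/\Gamma_0$ is torsion.

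By construction each such point lies on the algebraic curve $V:=\{P(z_1,z_2)=0\}\cap(\mathbb C^*)^2$. Liardet's theorem, in the finite-rank formulation (which follows from the finitely generated version by the standard saturation argument), asserts that $V\cap\Gamma$ is finite unless $V$ contains a translate of a positive-dimensional algebraic subgroup of $(\mathbb C^*)^2$, i.e.\ a set of the form $\{(ct^a, dt^b):t\in\mathbb C^*\}$ with $(a,b)\in\mathbb Z^2\setminus\{(0,0)\}$ and $c,d\in\mathbb C^*$. For our $P$ one has $P(ct^a, dt^b)=1-\tfrac{c}{3}t^a+\tfrac{d^2}{3}t^{2b}-cd^2\,t^{a+2b}$, and a short case analysis on which of the four exponents $0,a,2b,a+2b$ coincide rules out identical vanishing for any $(a,b)\neq(0,0)$: when the four are distinct the constant term $1$ already obstructs vanishing, and each collapse case ($a=0$, $b=0$, $a+2b=0$, or $a=2b$) quickly leads to inconsistent conditions on $c,d\in\mathbb C^*$. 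Hence $V\cap\Gamma$ is finite.

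This contradicts the infiniteness of $\{(e^{i\gamma_j\xi_1}, e^{i\gamma_j\xi_2})\}_{j\in\mathbb Z}$, which follows from two facts already available: the map $\gamma\mapsto(e^{i\gamma\xi_1}, e^{i\gamma\xi_2})$ is injective on $\mathbb R$ (a nonzero kernel element would force $\xi_1/\xi_2\in\mathbb Q$), and $F$ has infinitely many zeroes on the imaginary axis because by Kronecker the line $\gamma\mapsto(\gamma\xi_1,\gamma\xi_2)\bmod 2\pi$ densely covers $\mathbb T^2$ and so meets the zero curve of $L$ infinitely many times (with transversality established earlier in the text). The step I expect to be most delicate is verifying that $\{P=0\}$ contains no translate of a one-parameter subgroup, since the case analysis must cover every way the four exponents can collapse; one also needs Liardet's theorem in the finite-rank formulation that accommodates our divisible-hull group $\Gamma$ rather than a finitely generated one.
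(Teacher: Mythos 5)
Your proof is correct and takes essentially the same route as the paper: assume finite rational rank, place the points $(e^{i\gamma_j\xi_1},e^{i\gamma_j\xi_2})$ inside the division group of a finitely generated subgroup of $(\mathbb C^*)^2$, apply Liardet's theorem, and rule out translates of one-parameter subgroups in $\{P=0\}$ to force finiteness, contradicting the infinitude of zeros guaranteed by Kronecker. You simply spell out two steps the paper states tersely---the explicit case analysis showing $P(ct^a,dt^b)\not\equiv 0$ for $(a,b)\neq(0,0)$, and the injectivity of $\gamma\mapsto(e^{i\gamma\xi_1},e^{i\gamma\xi_2})$---but the key lemma and logical structure are identical.
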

 \begin{proof}
 Assume that the dimension is finite. This means that there exists a certain $ M \in \mathbf N $ such that
 every $ \gamma_j $ for arbitrary $ j $ can be written as a rational combination of $ \gamma_1, \dots, \gamma_M $:
 \begin{equation}
 \gamma_j = a^j_1 \gamma_1 + a^j_2 \gamma_2 + \dots + a^j_M \gamma_M, \quad a^j_m \in \mathbb Q.
 \end{equation}
 It follows that
 $$ e^{i \gamma_j  \xi_\alpha} = \left( e^{i \gamma_1 \xi_\alpha} \right)^{a^j_1}    
 \left( e^{i \gamma_2 \xi_\alpha} \right)^{a^j_2}  \times \dots \times    \left( e^{i \gamma_M \xi_\alpha} \right)^{a^j_M}    , \quad \alpha = 1,2,
$$
or equivalently
\begin{equation}
b_\alpha^{i \gamma_j} = \left( b_\alpha^{ik_1} \right)^{a^j_1}   \left( b_\alpha^{ik_2} \right)^{a^j_2}  \times \dots \times  
 \left( b_\alpha^{ik_M} \right)^{a^j_M}  .
\end{equation}
Consider the multiplicative subgroup of $ (\mathbb C^*)^2 $ generated by
$$ (b_1^{i k_m}, b_2^{ik_m} ), \quad m =1,2, \dots, M $$
with the multiplication carried out coordinate wise. Then points $ (b_1^{ik_j}, b_2^{ik_j} ) $ belong to
the {\bf division group} $ \overline{\Gamma} $ of $ \Gamma$,
that is
$$ \overline{\Gamma} = \left\{ z \in (\mathbb C^*)^2: z^m \in \Gamma \; \mbox{for some $ m \geq 1$} \right\}.$$
In accordance with S. Lang's conjecture \cite{La65}  intersection between any algebraic subvariety and the division group for a finitely  
generated subgroup is along a finitely many subtori. The following theorem is proven in \cite{Li74}

\begin{theorem*}[Liardet \cite{Li74}] \label{ThLiardet}
Assume that:
\begin{itemize}
\item  $ \Gamma $ is a finitely generated subgroup of the multiplicative group of the complex torus $ (\mathbb C^*)^2; $
\item $ \overline{\Gamma} $ is the division group of $ \Gamma $;
\item $ V \subset (\mathbb C^*)^2 $ is an algebraic subvariety given by the zero set of Laurent polynomials.
\end{itemize}
Then the intersection of $ V $ and $ \overline{\Gamma} $ belongs to the union of a finitely many translates of 
certain subtori $ T_1, \dots, T_\nu $ contained in $ V $:
\begin{equation} \label{eq49}
V \cap \overline{\Gamma} = V \cap \Big( T_1 \cup T_2 \cup \dots \cup T_\nu \big).
\end{equation}
\end{theorem*}

Now no line belongs to the zero set of $ L $, so $ L $
contains no one dimensional subtori and  hence the intersection of the zero set (the curves plotted in Figure \ref{Fig22}) and 
the union of $ T_j $ in \eqref{eq49}
 is also finite. This contradicts the fact that the number of intersection points is infinite if $ \xi_1 $ and
$ \xi_2 $ are rationally independent, which completes the proof.
\end{proof}

Our main result can be formulated as

\begin{theorem} \label{Thmu}
For $ \xi_1 /\xi_2 \notin \mathbb Q $, the Fourier quasicrystal measure $ \mu $
corresponding to $ P $ in \eqref{eq22}, satisfies:
\begin{enumerate}[i)]
\item $ a_\lambda = 1 $ for $ \lambda \in \Lambda$, that is $ \mu $ is a positive  ``{idempotent}''.
\item $ \dim_{\mathbb Q} \Lambda = \infty, \quad \dim_{\mathbb Q} S = 2, $ in
particular $ \mu $ is not a generalized Dirac comb.
\item $ \Lambda $  meets any arithmetic progression in $ \mathbb R$ 
in a finite number of points.
\item $ \Lambda $ is a Delone set (that is the minimal distance between elements of $ \Lambda $ is bounded
below by a positive constant and $ \Lambda $ is relative dense in $ \mathbb R$) while $ S $ is
not a Delone set.
\item $ | \hat{\mu}| $ is not translation bounded.  
\end{enumerate}
\end{theorem}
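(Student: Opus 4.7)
The plan is to prove the five items in an order that lets later parts reuse earlier ones. Part (i) follows immediately from the transversality established in the discussion just before the theorem: both the normal to $V = \{L = 0\}$ and the line direction $(\xi_1,\xi_2)$ lie in the open first quadrant, so every zero of $F(i\gamma)$ is a transverse intersection, hence simple, giving $a_\lambda = m(\lambda) = 1$. For (ii) the infinite-dimensionality $\dim_\mathbb{Q}\Lambda = \infty$ is exactly Lemma \ref{Le1}; the spectrum satisfies $S \subseteq \mathbb Z\xi_1 + 2\mathbb Z\xi_2 \subset \mathbb{Q}\xi_1 + \mathbb{Q}\xi_2$, and from formula \eqref{c_n} I would check that $c_P(1,0) = -1/3$ and $c_P(0,2) = 1/3$ are both nonzero, so $\xi_1, 2\xi_2 \in S$; since $\xi_1/\xi_2 \notin \mathbb Q$ these are $\mathbb Q$-independent and $\dim_\mathbb{Q} S = 2$. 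A generalized Dirac comb has support contained in a finite union of arithmetic progressions and therefore has finite $\mathbb Q$-rank, contradicting $\dim_\mathbb{Q}\Lambda = \infty$. Given (i) and (ii), item (v) then follows by a one-line application of Meyer's theorem stated in the introduction: if $|\hat\mu|$ were translation bounded, then the finite-valued coefficients $a_\lambda \equiv 1$ would force $\mu$ to be a generalized Dirac comb, contradicting (ii).

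For (iii) I would argue by contradiction via Liardet's theorem. Suppose $\Lambda \cap (a + d\mathbb Z)$ contains infinitely many $\gamma_{j_k} = a + n_k d$. Setting $u = (e^{ia\xi_1}, e^{ia\xi_2})$ and $g = (e^{id\xi_1}, e^{id\xi_2})$, the points $g^{n_k}$ all lie in $u^{-1}V \cap \langle g\rangle \subseteq u^{-1}V \cap \overline{\langle g\rangle}$. Since $d \neq 0$ and $\xi_1/\xi_2$ is irrational, at least one of $d\xi_1/(2\pi)$, $d\xi_2/(2\pi)$ is irrational, so $g$ has infinite order in $(\mathbb C^*)^2$ and the $g^{n_k}$ are infinitely many distinct elements. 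Liardet concentrates the intersection on finitely many translates of subtori contained in $u^{-1}V$; the essential geometric step is to check that $V$ (hence any translate) contains no positive-dimensional subtorus, i.e.\ no direction $(p,q) \in \mathbb Z^2 \setminus \{0\}$ makes $L(pt, qt)$ vanish identically in $t$. This is immediate for our explicit $L$, as a nontrivial finite trigonometric polynomial in $t$ cannot be identically zero. The resulting finiteness contradicts the infinitude of $\{g^{n_k}\}$.

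For (iv) the Delone property of $\Lambda$ splits into two assertions. Uniform discreteness: compactness of $V$ combined with the normal lying in a compact subcone of the open first quadrant gives a uniform positive lower bound $\theta_0$ on the angle between $V$ and the line direction. A direct computation shows $\nabla L$ does not vanish on $V$, so $V$ is a smooth compact $1$-manifold of bounded curvature, and a standard local/global argument on $\mathbb T^2$ (controlling the first return time of the geodesic to $V$ by transversality and curvature) yields a uniform positive lower bound on consecutive gaps. Relative density: $V$ separates $\mathbb T^2$ into open regions of positive measure, so by Weyl equidistribution of the irrational line any sufficiently long line segment visits both regions and must cross $V$, bounding the gaps above. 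For the non-Delone-ness of $S$, formula \eqref{c_n} gives $c_P(n,0) = -1/(n\cdot 3^n)$ and $c_P(0, 2n) = (-1)^{n+1}/(n\cdot 3^n)$, both nonzero, so $n\xi_1, 2m\xi_2 \in S$ for all $n, m \geq 1$. Dirichlet approximation applied to the irrational $\xi_1/(2\xi_2)$ produces arbitrarily large $n, m$ with $|n\xi_1 - 2m\xi_2|$ arbitrarily small, so $S$ contains pairs of distinct points at arbitrarily small distance and is not uniformly discrete.

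The main technical obstacle in the plan is the uniform-gap lower bound in the first half of (iv): compactness and transversality give the right picture locally, but converting this to a clean global inequality requires some care because $V$ may have several branches and the line returns near any given point of $V$ infinitely often. The secondary delicate point is verifying the no-subtorus condition in (iii), which is true in great generality but benefits from an explicit computation with the specific $L$ to rule out any rational direction $(p, q)$ along which $L$ could vanish identically.
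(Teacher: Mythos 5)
Your proposal is essentially correct and follows the same strategy as the paper: transversality for (i), Lemma~\ref{Le1} and the shape of $S$ for (ii), Liardet for (iii), the geometry of $\{L=0\}$ for (iv), and Meyer's theorem for (v). Two places where you substitute a genuinely different (and more self-contained) argument: for relative density in (iv) the paper avoids Weyl equidistribution entirely, observing directly that $L(\gamma\xi_1,\gamma\xi_2)=3\sin((\tfrac{\xi_1}{2}+\xi_2)\gamma)+\sin((\tfrac{\xi_1}{2}-\xi_2)\gamma)$ is a dominant sine of amplitude $3$ plus a perturbation of amplitude $1$, hence changes sign on every interval of length $2\pi/(\tfrac{\xi_1}{2}+\xi_2)$; and for non-Delone-ness of $S$ the paper invokes the Lev--Olevskii theorem (\cite{LeOl15}: both $\Lambda$ and $S$ uniformly discrete would force a generalized Dirac comb), whereas you extract the explicit subsequences $n\xi_1, 2m\xi_2\in S$ from the nonvanishing $c_P(n,0)=-\tfrac{1}{n3^n}$, $c_P(0,2n)=\tfrac{(-1)^{n+1}}{n3^n}$ and apply Dirichlet approximation — a nice concrete alternative that makes $\dim_{\mathbb Q}S=2$ and the accumulation in $S$ visible at the same stroke. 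One small correction in (iii): Liardet's conclusion concerns translates of subtori contained in $V$, so the no-subtorus check should be that $L(x_0+pt,\,y_0+qt)$ is not identically zero in $t$ for any $(p,q)\in\mathbb Z^2\setminus\{0\}$ and any base point $(x_0,y_0)$, not merely the homogeneous case $L(pt,qt)$; the same trigonometric-polynomial reasoning covers this (two sinusoids with coefficients $3$ and $1$ in $t$ cannot cancel identically), so nothing breaks, but the statement should be phrased with the translate. On the uniform-gap bound in (iv), which you flagged as the delicate step, the paper's route is slightly shorter than yours and worth noting: since $\partial y/\partial x<0$ along $\{L=0\}$ while the line $(\gamma\xi_1,\gamma\xi_2)$ has positive slope, the line meets each lifted component exactly once, so consecutive zeros lie on distinct lifted components, and compactness of the disjoint closed curves on $\mathbb T^2$ gives a positive minimal separation — no curvature bound or first-return argument is needed.
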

\begin{proof}
That $ \mu $ is a Fourier quasicrystal follows from Theorem \ref{Th1}. Note however that
the argument with $ c(n_1, 2 n_2) $ being Fourier
coefficients for $ \log P $ on the torus is especially transparent, since $ P $ is real on $ \mathbb T^2$  and $ \log P $ has just logarithmic singularities on the smooth
curve $ L(x,y) = 0 $  and therefore is absolutely integrable.

{\it i)} All zeroes of the secular equation \eqref{sec} have multiplicity one and form a discrete set, hence  by 
construction $ a_\lambda = 1 $ and  $ \mu $ is a positive idempotent discrete 
measure.

{\it ii)}  Since  $ \xi_1 / \xi_2 \notin \mathbb Q $ Lemma  \ref{Le1}
implies that 
 $ \dim_{\mathbb Q} \Lambda_P  = \infty $, hence
the support of $ \mu $ is not contained in a finite union of translates of any lattice
and  $ \mu $ is not a generalized Dirac comb. The spectrum $ S_P $ -- the support of $ \hat{\mu} $ --
belongs to
$$ L_+ \cup - L_+ \cup \{ 0 \} $$
and its dimension is equal to $2.$  

{\it iii)}  Assume that there exists a full arithmetic progression, say $ \gamma^* n  $
which intersects support of $ \mu $ at an infinite number of points.  Consider the
corresponding group generated by  $ (b_1^{\gamma^* }, b_2^{\gamma^* }). $ Its intersection with
the algebraic subvariety $ P(z_1, z_2)  = 0 $ (where $ P $ is given by \eqref{eq22}) is along a finitely many subtori
(Liardet's Theorem) as before. The zero set contains no one-dimensional subtori, hence the number of intersection points
on the torus is finite. The number of intersection points between the arithmetic progression and the zero set
can be infinite only if certain points occur several times, but this is impossible since $ \xi_1 /\xi_2 \notin
\mathbb Q. $ It follows that the intersection between any arithmetic progression and $ \Lambda $ is always
finite. The same result could be proven using Lech's theorem (Lemma on page 417 in \cite{Le53}).

{\it iv)} The zero set of  $ L(x,y) $ is given by two non-intersecting curves on $ [0, 2 \pi]^2 $ implying 
that there is a minimal distance $ \rho $ between the different components of the curve. Taking into account that
the intersection between the line $ (\xi_1 \gamma, \xi_2 \gamma) $ and the
zero curve of $ L(x,y) $ is non-tangential we conclude that there is a minimal distance between the
consecutive solutions $ \gamma_j $ of the secular equation \eqref{sec}.
The function $ L (\gamma \xi_1 ,\gamma \xi_2 ) $
is given by a sum of two sinus functions with amplitudes $3$ and $1$ implying that every
interval $ [ n \frac{2 \pi}{\frac{\xi_1}{2} + \xi_2}, (n+1) \frac{2 \pi}{\frac{\xi_1}{2} + \xi_2} ] $
contains a solution to the secular equation. It follows that the support of $ \mu $ is relatively dense and uniformly 
discrete, {\it i.e.} is a Delone set. The spectrum $ S_P $ is not a Delone set, since otherwise the measure $ \mu $
would be a generalized Dirac comb \cite{LeOl15}. 

 {\it v)} Similarly $ | \hat{\mu} |$ is not translation bounded  since otherwise this would contradict Meyer's Theorem stating that
every crystalline measure with $ a_\lambda $ 
from a finite set   ($ a_\lambda =1 $ in our case) and $ |\hat{\mu}| $
translation bounded is a generalized Dirac comb
(see Introduction and \cite{Me70}).
\end{proof}

{\bf Remarks to Theorem \ref{Thmu}}:
\begin{itemize}
\item Properties (ii) and (iii) show that the measures $ \mu $ in the theorem are far from being generalized Dirac combs.
\item In Theorem 5.16 of \cite{Me18} a positive measure $ \mu $ of the type in \eqref{mudef} is constructed for which $ \Lambda $
is discrete but for which $ S $ need not be (called there a Poisson measure). In fact these $ \Lambda$'s can be realized as the intersection of the graph
of a periodic continuous function on the two torus with an irrational line, and as such are of a similar shape to our $ \mu$'s.
\end{itemize}
\vspace{5mm}

The measures $ \mu $ in Theorem \ref{Thmu} provide examples answering the following questions
concerning crystalline measures:
\begin{enumerate}[(A)]
\item The last question in \cite{Me16}):
\newline a positive crystalline measure which is not a generalized Dirac comb;
\item Part 3 of question 11.2 in \cite{LeOl17}:
\newline a positive Fourier quasicrystal for which every arithmetic progression meets the support in a finite set;
\item  The question on page 3158 of \cite{Me16} and part 2 of question 11.2 in \cite{LeOl17})
\newline
a Fourier quasicrystal for which the support (that is $\Lambda$) is a Delone set, but
the spectrum (that is $S$) is not;
\item Problem 4.4 in \cite{La00}:
\newline a discrete set (that is $\Lambda$) which is a Bohr almost periodic Delone set, but is not an ideal crystal.
\end{enumerate}

In our forthcoming paper \cite{KuSa2} we use higher dimensional quantitative theorems from Diophantine
analysis \cite{Ev99,EvSchSch02,La84,Sch99} to show that general crystalline $ \mu $ 
constructed in Section 2
using a stable pair $ P, Q $ 
with parameters $ b_1, \dots, b_n $, satisfies:

\begin{theorem} \label{ThLast}
For such a
$ \mu $ we have that
\begin{enumerate}[i)]
\item $ \Lambda = L_1 \sqcup L_2 \sqcup \dots \sqcup L_{\nu} \sqcup N, $
with
$ L_1, \dots, L_\nu $ full arithmetic progressions and $ N $ if not empty is infinite dimensional over $ \mathbb Q $ (the
union $ \sqcup $ means counted with multiplicities).
\item $ a_\lambda $ take values in a finite set of positive integers; $ \mu $ is a positive Fourier
quasicrystal.
\item $ \dim_{\mathbb Q} S = \dim_{\mathbb Q} \left\{ \xi_1, \dots, \xi_n \right\}. $
\item There is $ c = c(P) < \infty $ such that any arithmetic progression in $ \mathbb R_+ $
meets $ N $ in at most $ c(P) $ points.
\end{enumerate}
\end{theorem}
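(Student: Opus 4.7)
Let $\psi: \mathbb{R} \to \mathbb{T}^n$ be $\psi(\gamma) = (e^{i\gamma\xi_1}, \ldots, e^{i\gamma\xi_n})$, so that $\Lambda = \psi^{-1}(V)$ with $V = \{P = 0\} \cap \mathbb{T}^n$, and the multiplicity $m(\lambda)$ equals the intersection multiplicity of the line $\psi(\mathbb{R})$ with $V$ at $\psi(\lambda)$. Since $\psi(\mathbb{R})$ has a fixed tangent direction, these multiplicities are uniformly bounded in terms of $\deg P$; together with Theorem \ref{Th1}, this yields (ii). Statement (iii) is immediate from \eqref{defmuhat}: $S \subset \mathbb{Z}\xi_1 + \ldots + \mathbb{Z}\xi_n$, and the functional equation \eqref{defQ} forces each $\ell_j \geq 1$, so each variable $z_j$ enters $P$ nontrivially and each $\xi_j$ appears in the $\mathbb{Q}$-span of $S$.

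For the decomposition in (i), define $L_1, \ldots, L_\nu$ as the (finitely many) arithmetic progressions in $\Lambda$ arising from subtori of $V$: each positive-dimensional subtorus $T \subset V$ whose direction is commensurate with $\psi(\mathbb{R})$ pulls back under $\psi$ to a coset of a closed subgroup of $\mathbb{R}$, i.e.\ an arithmetic progression. Set $N = \Lambda \setminus (L_1 \sqcup \ldots \sqcup L_\nu)$. The claim $\dim_\mathbb{Q} \mathcal{L}_\mathbb{Q}(N) = \infty$ (when $N \neq \emptyset$) follows by the contrapositive of a direct generalization of Lemma \ref{Le1}: if $\dim_\mathbb{Q} \mathcal{L}_\mathbb{Q}(\Lambda) < \infty$, then $\psi(\Lambda)$ sits inside the division hull $\overline{\Gamma'}$ of a finitely generated group $\Gamma' = \langle \psi(\gamma_1), \ldots, \psi(\gamma_M) \rangle$; Laurent's theorem (Lang's conjecture for $\mathbb{G}_m^n$) then places $V \cap \overline{\Gamma'}$ in finitely many subtori of $V$, whence $\Lambda$ itself is a finite union of arithmetic progressions and $N = \emptyset$.

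The main technical step is the uniform bound (iv), which requires passing from the qualitative Laurent statement to the quantitative subspace theorem bounds of Schmidt, Evertse, and Evertse--Schlickewei--Schmidt \cite{Sch99,Ev99,EvSchSch02,La84}. Given an arithmetic progression $\gamma_0 + d\mathbb{Z}$, the points $\psi(\gamma_0 + kd) = \psi(\gamma_0)\,\psi(d)^k$ range over a coset of a rank-one multiplicative subgroup of $(\mathbb{C}^*)^n$, and $P(\psi(\gamma_0 + kd)) = 0$ becomes an $S$-unit equation with $|M_P|+1$ terms in a rank-one group. The ESS bounds majorize the number of \emph{nondegenerate} solutions by a constant $c = c(|M_P|, \deg P)$ depending only on the combinatorial data of $P$, and \emph{not} on $\gamma_0$, $d$, or $b_1, \ldots, b_n$. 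Degenerate solutions --- where a proper subsum of the monomials of $P$ vanishes identically along the subgroup --- correspond precisely to the subtori of $V$ and hence to the arithmetic progressions $L_1, \ldots, L_\nu$ already extracted. Therefore $|(\gamma_0 + d\mathbb{Z}) \cap N| \leq c(P)$, proving (iv).

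The principal obstacle is securing the independence of this bound from the ambient multiplicative parameters $b_j$: this uniformity is exactly the hallmark of the modern ESS estimates (solution counts depending only on the rank of the group and the number of summands of the unit equation), and it is what gives an intrinsic constant $c(P)$ in (iv). A secondary subtlety is verifying that degenerate solutions of the $S$-unit equation are matched bijectively with the pullbacks of subtori, which requires a careful inductive stratification of $V$ by its torus components --- a procedure standard in the Bombieri--Masser--Zannier framework of unlikely intersections.
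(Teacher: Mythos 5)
The paper does not actually prove Theorem~\ref{ThLast}: it states the result and explicitly defers the argument to the forthcoming paper \cite{KuSa2}, citing \cite{Ev99,EvSchSch02,La84,Sch99} as the intended tools. So there is no in-paper proof to compare against; what can be said is that your sketch follows the strategy the authors signal (unit-equation/linear-recurrence bounds in the quantitative Lang--Laurent circle of ideas), and the overall architecture --- push $\Lambda$ into the torus via $\psi$, isolate the subtorus contribution as the arithmetic progressions $L_j$, apply the qualitative torsion/Laurent theorem for (i), and the quantitative ESS/Schmidt bounds for (iv) --- is plausible and consistent with the references.

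That said, there are two places where the sketch as written has a genuine gap rather than a merely compressed step. First, (iii) is asserted to be ``immediate'': you claim $\ell_j\geq 1$ forces $\xi_j$ to lie in the $\mathbb{Q}$-span of $S$. But $S$ is the support of $\hat\mu$, i.e.\ the set of $\Bxi\cdot\mathbf{k}$ for which the Taylor coefficient $c_P(\mathbf{k})$ (or $c_Q(\mathbf{k})$) of $\log P$ is nonzero; it is conceivable a priori that all surviving $\mathbf{k}$ have $k_j=0$ for some fixed $j$, through cancellation in the log expansion. The fact that $z_j$ occurs in $P$ does not by itself rule this out; one needs an argument producing, for each $j$, some $\mathbf{k}$ with $k_j>0$ and $c_P(\mathbf{k})\neq 0$ (for instance by looking at $\mathbf{k}$ minimal in $M_P$ with $k_j>0$ and controlling the competing terms). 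Second, in (i) you prove ``$\dim_\mathbb{Q}\mathcal{L}_\mathbb{Q}(\Lambda)<\infty\Rightarrow N=\emptyset$,'' whereas the theorem requires ``$\dim_\mathbb{Q}\mathcal{L}_\mathbb{Q}(N)<\infty\Rightarrow N=\emptyset$.'' These are equivalent here only because each $L_j$ spans at most a two-dimensional $\mathbb{Q}$-space and $\nu<\infty$, so finiteness of $\dim_\mathbb{Q}N$ does imply finiteness of $\dim_\mathbb{Q}\Lambda$; this bridge should be stated, since otherwise the conclusion about $N$ does not follow. A smaller point: in (iv), the dictionary ``degenerate solutions of the unit equation $\Leftrightarrow$ subtori contained in $V$'' is the crux of matching the ESS count to $N$ rather than to $\Lambda$, and your phrase about a ``careful inductive stratification'' is exactly where a real argument has to live; as written it is a placeholder, not a proof. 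Finally, in (ii), the uniform bound on $m(\lambda)$ is cleaner via the elementary fact that an exponential polynomial (Dirichlet polynomial) with $m$ distinct frequencies vanishes to order at most $m-1$ at any point, rather than via intersection multiplicities of a real line with a complex hypersurface, which requires some care to make precise.
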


{\bf Remarks to Theorem \ref{ThLast}}:
\begin{itemize}
\item Theorem \ref{ThLast} allows us to make $ \mu$'s for which $ \dim_{\mathbb Q} S $ is as
large as we please, however in as much as any positive crystalline measure is (measure) almost periodic
it follows from Lemma 5 of \cite{Me16} that $ S \cap (0, \infty) $ or $ S \cap (-\infty,0) $ cannot be linearly
independent over $ \mathbb Q. $
\end{itemize}

\section{Acknowledgement}

The authors would like to thank Boris Shapiro for initiating our collaboration, Yves Meyer
for attracting our attention to crystalline measures and pointing us to his paper \cite{Me18}, 
 Alexei Poltoratskii for pointing out importance of
positive crystalline measures, and Nir Lev and Alexander Olevskii for their comments.

\end{document}